\begin{document}

\begin{titlepage}

 \vspace*{-2cm}

\vspace{.5cm}

\begin{centering}

\huge{Harmonic analysis of the functions $\tilde{\Delta}(x)$ and
$N(T)$  }

\vspace{.5cm}

\large  {Jining Gao }\\

\vspace{.5cm}

Department of Mathematics, Shanghai Jiaotong University, Shanghai
,P. R. China

\vspace{.5cm}
\begin{abstract}

In this paper,  under the Riemann hypothesis, we study the Fourier
analysis about the functions $\tilde{\Delta}(x)$ and $N(T)$ .

\end{abstract}

\end{centering}

\end{titlepage}

\pagebreak

\def\lh{\hbox to 15pt{\vbox{\vskip 6pt\hrule width 6.5pt height 1pt}
  \kern -4.0pt\vrule height 8pt width 1pt\hfil}}
\def\blob{\mbox{$\;\Box$}}
\def\qed{\hbox{${\vcenter{\vbox{\hrule height 0.4pt\hbox{\vrule width
0.4pt height 6pt \kern5pt\vrule width 0.4pt}\hrule height
0.4pt}}}$}}

\newtheorem{theorem}{Theorem}
\newtheorem{lemma}[theorem]{Lemma}
\newtheorem{definition}[theorem]{Definition}
\newtheorem{corollary}[theorem]{Corollary}
\newtheorem{proposition}[theorem]{Proposition}
\newcommand{\proof}{\bf Proof.\rm}

\section{INTRODUCTION}

 Riemann hypothesis has been studied in many different ways, in this
 paper, we will try to use somewhat new angles to study RH.
 Most results of this paper are obtained under the RH.
 As we know, Guinand formula is a representation of $N(T)$
 which is the distribution function of  Riemann zeros in
 term of series of the  prime number powers, although Guinand
 formula \cite{G}is a result under the assumption of RH,it provides an
 explicit method to figure out all non trivial Riemann zeros.
 Actually, this fact is far from trivial because once we have
 prime number representation of $N(T)$ (Guinand formula) at hand,
 we can immediately restore a function via the distribution of it's
 zeros,so Guinand formula is equivalent to RH and we will prove it
 in the first section. Since Guinand formula is very important in
 this paper and Guinand original proof is complicated and full of
 the favor of harmonic analysis, we will first of all give another
 simple and elementary proof based on the lemma, which is the
 ground stone of this paper, besides ,our new proof gives out
 a stronger conclusion  than the original statement of Guinand
 formula. This stronger result will help us to check the truth
 of RH  much more efficiently.
 In the second section we rewrite Guinand formula and
 Riemann-Mangoldt formula as two integral equations of two
 "functional variables" $\tilde{\Delta}(x)$ and $S(T)$, which
 seems to imply Guinand formula and  Riemann-Mangoldt formula are
 reciprocal to each other and such integral representation will be
 used in the 4th section.
In the third section ,first of all, we derive an elementary formula
based on functional equation of Riemann zeta function and lemma.
This formula provides infinitely many non trivial integral equations
of $N(T)$, also, we use the elementary formula to prove  a theorem
which claims $\mid\tilde{\Delta}(x)\mid$ has a non-zero measurement
of a positive lower bound.

\section{Guinand formula with an error term and it's inverse theorem}
In this section, we will give out a proof of Guinand formula with
the uniformly convergent error term, besides, we also give out an
inverse theorem of Guinand formula. First of all we need following
notations and formulas which will be used throughout this
paper\cite{E},\cite{KV}.

{\bf Chebyshev function}
\begin{eqnarray}
\psi(x)=\sum_{n< x}\Lambda(n)\nonumber
\end{eqnarray}
Where the Von Mangoldt function $\Lambda(n)=logp$ if $n=p^k$ for
some $k$ and some prime number $p$ ,$\Lambda(n)=0$ otherwise.

\begin{theorem}{\bf ( Mangoldt and Riemann explicit formula)}
\begin{eqnarray}
\psi(x)=x-lim_{T\rightarrow \infty}\sum_{\left|Im\rho \right|< T}
\frac{x^{\rho}}{\rho}-log(2\pi)+\sum_{n=1}^{\infty}\frac{x^{-2n}}{2n}\nonumber
\end{eqnarray}
Where $\rho$ runs through all non-trivial Riemann zeros.
\end{theorem}

\begin{theorem} \cite{KV}
\begin{eqnarray}
\psi(x)=x-\sum_{\left|Im\rho \right|< T}
\frac{x^{\rho}}{\rho}+O(\frac{xlog^{2}x}{T})\nonumber
\end{eqnarray}
\end{theorem}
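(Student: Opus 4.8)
The statement is the classical truncated (effective) form of the explicit formula, so my plan is to prove it by contour integration rather than by truncating the exact formula of Theorem 1. The starting point is the effective Perron formula applied to the Dirichlet series $-\zeta'(s)/\zeta(s)=\sum_{n\ge 1}\Lambda(n)n^{-s}$, valid for $\Re s>1$. Taking $c=1+1/\log x$ one obtains
\[ \psi(x)=\frac{1}{2\pi i}\int_{c-iT}^{c+iT}\left(-\frac{\zeta'(s)}{\zeta(s)}\right)\frac{x^{s}}{s}\,ds+O\!\left(\frac{x\log^{2}x}{T}\right), \]
where the error is the standard truncation error of Perron's formula. This is in fact where the displayed error term is born: the dominant contribution comes from integers $n$ close to $x$, for which $|\log(x/n)|$ is small, and the bound $\sum_{n}\Lambda(n)/n\ll\log x$ supplies one further logarithm, giving $O(x\log^{2}x/T)$.

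The main step is to move the segment of integration from $\Re s=c$ to the left, up to $\Re s=-U$ with $U\to\infty$ through odd integers, and to collect the enclosed residues of $\left(-\zeta'(s)/\zeta(s)\right)x^{s}/s$. The pole of $-\zeta'/\zeta$ at $s=1$ contributes the main term $x$; each nontrivial zero $\rho$ with $|\Im\rho|<T$ contributes $-x^{\rho}/\rho$; the pole of $x^{s}/s$ at $s=0$ contributes $-\zeta'(0)/\zeta(0)=-\log(2\pi)$; and the trivial zeros at $s=-2n$ contribute $\sum_{n\ge 1}x^{-2n}/(2n)$. These are exactly the terms appearing in Theorem 1, so what remains is to verify that, after truncation at height $T$, the two horizontal segments $\Im s=\pm T$ and the left-hand segment $\Re s=-U$ are absorbed into the stated error.

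The crux, and the principal obstacle, is the uniform bound on $\zeta'/\zeta$ along the horizontal lines: I would establish $|\zeta'(s)/\zeta(s)|\ll\log^{2}T$ for $-1\le\Re s\le c$ from the Hadamard partial-fraction expansion of $\zeta'/\zeta$ together with the zero-counting estimate $N(T+1)-N(T)\ll\log T$. Since this bound degrades when $T$ lies too near an ordinate $\gamma$ of a zero, one must first select $T$ with $|T-\gamma|\gg1/\log T$ for all zeros; because the zeros in any unit interval number $O(\log T)$, such a $T$ exists within $O(1)$ of any prescribed height, and the resulting adjustment is harmless. Feeding this estimate into the horizontal integrals and letting $U\to\infty$, so that the left segment (of size $\ll x^{-U}$) vanishes, shows these contributions are no larger than the Perron error, which completes the argument. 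I would remark, finally, that one cannot shortcut this by deducing the result directly from Theorem 1 via a bound on the tail $\sum_{|\Im\rho|\ge T}x^{\rho}/\rho$: that tail is only conditionally convergent, and the cancellation that makes it small is precisely what the contour argument encodes.
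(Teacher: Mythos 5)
Your argument is the standard Perron-formula/contour-shift proof of the truncated explicit formula, which is exactly the route taken in the reference \cite{KV} that the paper cites for this theorem (the paper itself supplies no proof). The outline is correct, including the two genuinely delicate points you isolate: choosing $T$ at distance $\gg 1/\log T$ from every ordinate so that $\zeta'/\zeta \ll \log^{2}T$ on the horizontal segments, and the fact that the tail $\sum_{|Im\rho|\ge T}x^{\rho}/\rho$ cannot simply be estimated termwise from Theorem~1.
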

we set
\begin{eqnarray}
\tilde{\psi}(x)=\frac{x^2}{2}-\sum_{\rho}\frac{x^{\rho+1}}{\rho(\rho+1)}-xln(2\pi)
-\sum_{n=1}^{\infty}\frac{x^{-2n+1}}{2n(2n-1)}\label{a}
\end{eqnarray}
It's easy to prove that when $x$ is not equal to any
integer,$\tilde{\psi}(x)$ is differentialble and it's derivative is
just $\psi(x)$ and it's continuous when $x>0$ \cite{E}. We set
$\Delta(x)=\psi(x)-x$ and
$\tilde{\Delta}(x)=\tilde{\psi}(x)-\frac{x^2}{2}$ .

 The following lemma will is important for  deducing the Guinand
 formula with the error term.

 \begin{lemma}
when $s\neq \rho$,
\begin{eqnarray}
\frac{\zeta'}{\zeta}(s)=-\sum_{n<X}\frac{\Lambda(n)}{n^s}+\psi(X)X^{-s}+s\tilde{\psi}(X)X^{-s-1}- \nonumber\\
\frac{s(s+1)}{2(s-1)}X^{1-s}+\sum_{\rho}\frac{s(s+1)X^{\rho-s}}{\rho(\rho+1)(s-\rho)}+\sum_{n\geq
1}\frac{s(s+1)X^{-2n-2s}}{2n(2n-1)(s+2n)}\label{a0}
\end{eqnarray}
\end{lemma}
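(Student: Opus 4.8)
The plan is to establish the identity first in the half-plane $\Re(s)>1$, where the Dirichlet series $\frac{\zeta'}{\zeta}(s)=-\sum_{n=1}^{\infty}\Lambda(n)n^{-s}$ converges absolutely, and then extend it to all $s\neq\rho$ by analytic continuation. Splitting the series at $X$ gives
\begin{eqnarray}
\frac{\zeta'}{\zeta}(s)=-\sum_{n<X}\frac{\Lambda(n)}{n^s}-\sum_{n\geq X}\frac{\Lambda(n)}{n^s},\nonumber
\end{eqnarray}
so the entire content of the lemma reduces to evaluating the tail $\sum_{n\geq X}\Lambda(n)n^{-s}$ in closed form. (The argument is in fact unconditional; RH is only convenient for bounding the zero-series below.)

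First I would write the tail as a Riemann--Stieltjes integral $\int_X^\infty t^{-s}\,d\psi(t)$ and integrate by parts. For $\Re(s)>1$ the boundary contribution at infinity vanishes because $\psi(t)\sim t$, while the boundary at $X$ contributes $-\psi(X)X^{-s}$; negating to pass from the tail back to $\frac{\zeta'}{\zeta}(s)$, this accounts for the term $\psi(X)X^{-s}$. The remaining piece is $-s\int_X^\infty \psi(t)t^{-s-1}\,dt$. Using $\tilde{\psi}'(t)=\psi(t)$ away from the integers, I would integrate by parts once more: the infinity boundary again vanishes (now because $\tilde{\psi}(t)\sim t^2/2$ together with $\Re(s)>1$), the boundary at $X$ yields $s\tilde{\psi}(X)X^{-s-1}$, and there remains $-s(s+1)\int_X^\infty \tilde{\psi}(t)t^{-s-2}\,dt$.

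The decisive step is to substitute the explicit formula (\ref{a}) for $\tilde{\psi}(t)$ into this last integral and integrate term by term. The leading term $t^2/2$ gives, via $\int_X^\infty t^{-s}\,dt=X^{1-s}/(s-1)$, the pole term $-\frac{s(s+1)}{2(s-1)}X^{1-s}$; the zero-sum $-\sum_\rho t^{\rho+1}/(\rho(\rho+1))$ gives, via $\int_X^\infty t^{\rho-s-1}\,dt=X^{\rho-s}/(s-\rho)$, the series $\sum_\rho \frac{s(s+1)X^{\rho-s}}{\rho(\rho+1)(s-\rho)}$; and the trivial-zero sum produces the final series. What makes the termwise integration legitimate is precisely that, unlike the only conditionally convergent series for $\psi$, the integrated series (\ref{a}) for $\tilde{\psi}$ converges absolutely (since $\sum_\rho|\rho|^{-2}<\infty$), so Tonelli's theorem justifies interchanging $\sum_\rho$ with $\int_X^\infty$; the resulting zero-series then converges absolutely for $\Re(s)>1$ because its general term is $O(|\rho|^{-3})$.

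Finally I would carry out the analytic continuation. The partial sum $-\sum_{n<X}\Lambda(n)n^{-s}$ and the boundary contributions $\psi(X)X^{-s}$ and $s\tilde{\psi}(X)X^{-s-1}$ are entire in $s$, while the remaining three terms are meromorphic on $\mathbb{C}$ with simple poles at $s=1$, at $s=\rho$, and at $s=-2n$ respectively, matching exactly the singularities of $\frac{\zeta'}{\zeta}(s)$. Since the zero-series converges locally uniformly on compact sets avoiding the $\rho$, the right-hand side is meromorphic in the whole plane, agrees with the left-hand side for $\Re(s)>1$, and hence agrees for all $s\neq\rho$. I expect the main obstacle to be making this convergence-and-continuation argument fully rigorous: one must control the zero-series uniformly away from the zeros, and one must ensure the boundary-at-infinity contributions, which were discarded only under $\Re(s)>1$, are correctly absorbed by the continuation rather than reintroduced. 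Careful bookkeeping of every term generated by (\ref{a}) --- in particular the exact coefficients and exponents in the trivial-zero series and the handling of the linear term $-t\ln(2\pi)$ --- is where the computation must be executed with precision.
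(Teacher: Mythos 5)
Your proof is correct, and it closes the identity by a genuinely different route from the paper's. The paper never touches the tail $\sum_{n\ge X}$: it writes the head $f_X(s)=\sum_{n<X}\Lambda(n)n^{-s}=\int_1^X x^{-s}\,d\psi(x)$, integrates by parts twice over the \emph{finite} interval $[1,X]$, substitutes formula (\ref{a}), and splits the result into an $X$-dependent part $J_X(s)$ and an $X$-independent part $I(s)$; the decisive step is to recognize, via the partial-fraction identity $\frac{s(s+1)}{z(z+1)(s-z)}=\frac{s}{z(z+1)}+\frac{1}{s-z}+\frac{1}{z}$ together with the Mittag--Leffler expansion of $\zeta'/\zeta$, that $s(s+1)I(s)=-\frac{\zeta'}{\zeta}(s)+as+b$, the constants $a,b$ then being killed by letting $X\to\infty$ with $\Re s>1$. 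Because every integral there is over $[1,X]$, the paper's identity holds for all $s\neq\rho$ with no continuation argument; your tail-at-infinity version buys independence from the Hadamard/Mittag--Leffler input at the price of the final analytic continuation, which, as you note, needs the zero-series to converge locally uniformly away from the zeros (your $O(|\rho|^{-3})$ bound for fixed $X$ does supply this). One concrete payoff of your insistence on bookkeeping the $-t\ln(2\pi)$ term: carried through either computation it contributes an extra term $(s+1)\ln(2\pi)X^{-s}$ on the right-hand side, which the lemma as printed omits (and the exponent in the last sum should read $X^{-s-2n}$, as in the paper's own intermediate formula (\ref{e}), not $X^{-2n-2s}$); the omission is harmless in the subsequent $X\to\infty$ limits since the term is $O(X^{-\sigma})$, but a careful execution of your plan would surface and correct it.
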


\begin{proof}
Let $$f_{X}(s)=\sum_{n<X}\frac{\Lambda(n)}{n^s}$$ Using integration
by parts twicely, we have that
\begin{eqnarray}
f_{X}(s)=\int_{1}^{X}x^{-s}d\psi(x)\nonumber
=\psi(X)X^{-s}-\int_{1}^{X}\psi(x)dx^{-s}\nonumber
=\psi(X)X^{-s}+s\int_{1}^{X}\psi(x)x^{-s-1}dx\nonumber\\
=\psi(X)X^{-s}+s\int_{1}^{X}x^{-s-1}d\tilde{\psi}(x)\nonumber\\
=\psi(X)X^{-s}+
s\tilde{\psi}(X)X^{-s-1}+s(s+1)\int_{1}^{X}\tilde{\psi}(x)x^{-s-2}dx\label{b}
\end{eqnarray}

and by  formula \ref{a},we can further get
\begin{eqnarray}
\int_{1}^{X}\tilde{\psi}(x)x^{-s-2}dx=\int_{1}^{X}\frac{\frac{x^2}{2}-\sum_{\rho}\frac{x^{\rho+1}}{\rho(\rho+1)}-ln(2\pi) x-\sum_{n=1}^{\infty}\frac{x^{-2n+1}}{2n(2n-1)}}{x^{s+2}}dx\nonumber\\
=\frac{1}{2}(\frac{X^{1-s}}{1-s}-\frac{1}{1-s})-\sum_{\rho}\frac{1}{\rho(\rho+1)}(\frac{X^{\rho-s}}{\rho-s}-\frac{1}{\rho-s})\nonumber\\
+ln(2\pi)(\frac{X^{-s}}{s}-\frac{1}{s})+\sum_{n\geq
1}\frac{1}{2n(2n-1)}(\frac{X^{-s-2n}}{s+2n}-\frac{1}{s+2n})\nonumber
\end{eqnarray}
We collect all above terms as two groups $J_{X}(s)$ and
$I(s)$,obviously,
$$I(s)=\frac{1}{2}\frac{1}{s-1}-\sum_{\rho}\frac{1}{\rho(\rho+1)}\frac{1}{s-\rho}-\frac{ln(2\pi)}{s}-\sum_{n\geq 1}\frac{1}{2n(2n-1)}\frac{1}{s+2n}$$
By formula \ref{b} and using the notations  $J_{X}(s)$ and $I(s)$,
we get
\begin{eqnarray}
f_{X}(s)=\psi(X)X^{-s}+
s\tilde{\psi}(X)X^{-s-1}+s(s+1)J_{X}(s)+s(s+1)I(s)\label{c}
\end{eqnarray}
and
$$s(s+1)I(s)=\frac{s(s+1)}{2(s-1)}-\sum_{\rho}\frac{s(s+1)}{\rho(\rho+1)(s-\rho)}-(s+1)ln(2\pi)-\sum_{n\geq
1}\frac{s(s+1)}{2n(2n-1)(s+2n)}$$ By the following identity,
\begin{eqnarray}
\frac{s(s+1)}{z(z+1)(s-z)}=\frac{s}{z(z+1)}+\frac{1}{s-z}+\frac{1}{z}\nonumber
\end{eqnarray}
we have that
\begin{eqnarray}
s(s+1)I(s)=-\frac{\zeta'}{\zeta}(s)+as+b \label{d}
\end{eqnarray}
where $a,b$ are some constants which can be determined immediately.
According \ref{c} and \ref{d} ,we get a new representation of
$\frac{\zeta'}{\zeta}(s)$ when $s\neq \rho$ as follows:
\begin{eqnarray}
\frac{\zeta'}{\zeta}(s)=-\sum_{n<X}\frac{\Lambda(n)}{n^s}+\psi(X)X^{-s}+s\tilde{\psi}(X)X^{-s-1}\nonumber\\
-\frac{s(s+1)}{2(s-1)}X^{1-s}+\sum_{\rho}\frac{s(s+1)X^{\rho-s}}{\rho(\rho+1)(s-\rho)}+\sum_{n\geq
1}\frac{s(s+1)X^{-2n-s}}{2n(2n-1)(s+2n)}+as+b\label{e}
\end{eqnarray}
Since when
$Res>1$,$$\frac{\zeta'}{\zeta}(s)=-\sum_{n=1}^{\infty}\frac{\Lambda(n)}{n^s}$$
Let $X\rightarrow \infty$ on the right side of \ref{e} when $Res>1$,
we immediately get $a=b=0$, that follows our theorem.
\end{proof}
\newline
As we know, $$log\zeta(s_0)=\int_{2}^{s_0}\frac{\zeta'}{\zeta}(s)ds
\label{f}$$,where the integral path is a positive orient half
rectangle with vertices $2,2+iT, \sigma+iT$ and $s_{0}=\sigma+iT$
$s_0 \neq \rho$. Taking this complex integral  on both sides of
\ref{e}, we directly get following theorem:

\bigskip
\begin{theorem}
When $s_0\neq \rho $, we have that
\begin{eqnarray}
log\zeta(s_0)=\sum_{n<X}\frac{\Lambda(n)}{(logn) n^{s_0}}-\frac{\Delta(X)}{(logX)X^{s_0}}-\frac{\tilde{\Delta}(X)}{(logX)X^{s_{0}+1}}(s_{0}+\frac{1}{lnX})\nonumber\\
+\int_{2}^{s_0}\frac{X^{1-s}}{1-s}ds+\tilde{J}_{X}(s_0)+C_{0}\label{h}
\end{eqnarray}
Where
\begin{eqnarray}
\tilde{J}_{X}(s_0)=\int_{2}^{s_0}s(s+1)J(X)ds\nonumber
=-\frac{1}{lnX}\sum_{\rho}\frac{1}{\rho(\rho+1)}[\frac{s_{0}(s_{0}+1)X^{\rho-s_0}}{s_{0}-\rho}\nonumber\\
-\int_{2}^{s_0}X^{\rho-s}(\frac{2s+1}{s-\rho}-\frac{s^{2}+s}{(s-\rho)^2})ds]\nonumber\\
-\frac{1}{lnX}\sum_{n\geq 1}\frac{1}{2n(2n-1)}[\frac{
s_{0}(s_{0}+1)X^{-2n-s}}{s+2n}-\int_{2}^{s_0}X^{-2n-s}(\frac{2s+1}{s+2n}-\frac{s^{2}+s}{(s+2n)^2})ds]\nonumber
\end{eqnarray}
and $C_{0}$  is a real constant.
\end{theorem}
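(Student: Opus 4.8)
The plan is to produce the formula by integrating the Lemma's representation \ref{e} of $\frac{\zeta'}{\zeta}(s)$ term by term along the half-rectangular contour, using $\log\zeta(s_0)-\log\zeta(2)=\int_{2}^{s_0}\frac{\zeta'}{\zeta}(s)\,ds$ and absorbing all lower-endpoint ($s=2$) contributions, together with $-\log\zeta(2)$, into the constant $C_0$. Since at $s=2$ every resulting expression is real and the zeros occur in conjugate pairs, the collected constant $C_0$ will indeed be real.

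The first, and decisive, step is to rewrite the three main terms of \ref{e} through $\psi(X)=\Delta(X)+X$ and $\tilde{\psi}(X)=\tilde{\Delta}(X)+\frac{X^2}{2}$, and to observe that the pure powers of $X$ telescope. Collecting the coefficients of $X^{1-s}$ gives
\begin{eqnarray}
1+\frac{s}{2}-\frac{s(s+1)}{2(s-1)}=\frac{1}{1-s},\nonumber
\end{eqnarray}
so the three main terms collapse to $\Delta(X)X^{-s}+s\tilde{\Delta}(X)X^{-s-1}+\frac{X^{1-s}}{1-s}$. The last summand integrates immediately into the term $\int_{2}^{s_0}\frac{X^{1-s}}{1-s}\,ds$ appearing in the statement.

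Next I would perform the elementary integrations $\int_{2}^{s_0}X^{-s}\,ds$ and $\int_{2}^{s_0}sX^{-s}\,ds$, the latter by parts. These yield $-\frac{\Delta(X)}{(\log X)X^{s_0}}$ and $-\frac{\tilde{\Delta}(X)}{(\log X)X^{s_0+1}}\big(s_0+\frac{1}{\ln X}\big)$ respectively, up to their values at $s=2$; likewise $-\int_{2}^{s_0}\sum_{n<X}\Lambda(n)n^{-s}\,ds$ produces $\sum_{n<X}\frac{\Lambda(n)}{(\log n)n^{s_0}}$ plus an $s=2$ remainder. To obtain $\tilde{J}_X(s_0)$ I would integrate each term of the two series by parts with $u=\frac{s(s+1)}{s-\rho}$ and $dv=X^{\rho-s}\,ds$ (and the analogous choice for the trivial zeros); since $\frac{d}{ds}\frac{s(s+1)}{s-\rho}=\frac{2s+1}{s-\rho}-\frac{s^2+s}{(s-\rho)^2}$, this reproduces exactly the bracketed integrand in the definition of $\tilde{J}_X(s_0)$, with the factor $-\frac{1}{\ln X}$ coming from $\int X^{\rho-s}\,ds=-\frac{X^{\rho-s}}{\ln X}$.

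The main obstacle will be analytic rather than algebraic: I must justify interchanging the infinite sum over nontrivial zeros $\rho$ (and the sum over trivial zeros) with the contour integration. This requires the uniform convergence of these series along the half-rectangle, which rests on the density of zeros together with the damping factor $\frac{1}{\rho(\rho+1)}$, and the existence of an analytic branch of $\log\zeta$ along the path, guaranteed by the hypothesis $s_0\neq\rho$ and by choosing the contour to avoid all zeros. Once the interchange is legitimate, the remaining manipulations are the routine integrations above, and grouping every $s=2$ value with $-\log\zeta(2)$ gives the real constant $C_0$, completing the derivation.
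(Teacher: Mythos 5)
Your proposal is correct and follows essentially the same route as the paper: the paper likewise obtains the theorem by integrating the representation of $\frac{\zeta'}{\zeta}(s)$ from the Lemma along the half-rectangle from $2$ to $s_0$, ``using integration by parts and collecting all terms containing $X^{1-s}$,'' which is exactly your collapse $1+\frac{s}{2}-\frac{s(s+1)}{2(s-1)}=\frac{1}{1-s}$ followed by the elementary and by-parts integrations you describe. You merely spell out the details (and the justification for interchanging the sums over zeros with the contour integral) that the paper leaves implicit.
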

\begin{proof}
Using integration by parts and collecting all terms containing
$X^{1-s}$, we immediately get above results.
\end{proof}
setting $s_0=\frac{1}{2}+iT$ in the formula \ref{h} and taking
imaginary parts on both sides,   we have that
\begin{theorem}
If the Riemann hypothesis is true, and $\delta$ is the distance
between $T$ and the coordinate of the nearest Riemann zero,we have
\begin{eqnarray}
\pi
S(T)=-\sum_{n<X}\frac{\Lambda(n)sin(Tlogn)}{\sqrt{n}}+\frac{\Delta(X)sin(TlogX)}{\sqrt{X}(logX)}+Im(\int_{2}^{\frac{1}{2}+iT}\frac{X^{1-s}}{1-s}ds)+
O(\frac{T^3}{\delta^{2}lnX})\label{a5}
\end{eqnarray}
 in the limit language, we have
\begin{eqnarray}
\pi
S(T)=-lim_{X\rightarrow \infty}[\sum_{n<X}\frac{\Lambda(n)sin(Tlogn)}{\sqrt{n}(logn)}\nonumber\\
-
\frac{\Delta(X)sin(TlogX)}{\sqrt{X}(logX)}-Im(\int_{2}^{\frac{1}{2}+iT}\frac{X^{1-s}}{1-s}ds)]\label{i}
\end{eqnarray}
\end{theorem}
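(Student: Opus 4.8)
The plan is to specialize the exact identity of Theorem 3 (equation \ref{h}) to the critical point $s_0 = \frac{1}{2}+iT$ and to extract its imaginary part, using the classical relation $\pi S(T) = \arg\zeta(\frac{1}{2}+iT) = Im\,\log\zeta(\frac{1}{2}+iT)$. Because $C_0$ is real it contributes nothing to the imaginary part, so the left-hand side of \ref{h} becomes exactly $\pi S(T)$, and the entire task is to read off the imaginary parts of the six terms on the right.

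First I would handle the three ``main'' terms. Writing $n^{-s_0} = n^{-1/2}(\cos(T\log n) - i\sin(T\log n))$ and $X^{-s_0} = X^{-1/2}(\cos(T\log X) - i\sin(T\log X))$, the Dirichlet sum contributes $-\sum_{n<X}\frac{\Lambda(n)\sin(T\log n)}{\sqrt{n}\,\log n}$, the $\Delta(X)$ term contributes $\frac{\Delta(X)\sin(T\log X)}{\sqrt{X}\,\log X}$, and the integral $\int_{2}^{s_0}\frac{X^{1-s}}{1-s}ds$ is retained together with its own imaginary part. These are exactly the explicit terms displayed in \ref{a5} and \ref{i}, so the content of the theorem is that the two surviving pieces, the $\tilde{\Delta}(X)$ term and the whole of $\tilde{J}_{X}(s_0)$, are absorbed into the error $O(\frac{T^3}{\delta^{2}\ln X})$.

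Here the genuine work begins. For the $\tilde{\Delta}(X)$ term, under RH one has $|X^{\rho+1}| = X^{3/2}$, so the series defining $\tilde{\Delta}(X)$ is $O(X^{3/2})$ by convergence of $\sum_{\rho}|\rho(\rho+1)|^{-1}$; since $|X^{-s_0-1}| = X^{-3/2}$ and $|s_0| = O(T)$, this term is $O(T/\log X)$, comfortably inside the claimed bound. For $\tilde{J}_{X}(s_0)$ the prefactor $1/\ln X$ is already explicit, so the task reduces to bounding the bracketed sums over $\rho$ (the analogous sum over trivial zeros being harmless). Under RH we have $\rho - s_0 = i(\gamma - T)$, hence $|X^{\rho-s_0}| = 1$ and no exponential decay is available; the size is controlled entirely by $s_0(s_0+1) = O(T^2)$, by $1/|\rho(\rho+1)|$, and by the small denominators $s_0 - \rho$ and $(s_0-\rho)^2$ that appear when $\gamma$ is close to $T$.

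This estimation of the zero-sum is the main obstacle. I would split the sum according to whether $|\gamma - T| \ge 1$ or $|\gamma - T| < 1$. For the far zeros the denominators $1/|s_0-\rho|$ and $1/|s_0-\rho|^2$ are bounded, so convergence of $\sum_{\rho}|\rho(\rho+1)|^{-1}$ together with $s_0(s_0+1) = O(T^2)$ controls their total contribution. For the near zeros, whose number is $O(\log T)$ by the density estimate $N(T+1) - N(T) = O(\log T)$, the decisive quantities are the small denominators $s_0 - \rho$ and $(s_0-\rho)^2$ at the nearest zero, which RH lets me bound by $\delta^{-1}$ and $\delta^{-2}$; combined with the quadratic factors $s_0(s_0+1) = O(T^2)$ and $s^2+s = O(T^2)$ and the $O(T)$ length of the integration path, a careful but routine accounting yields a bound of the form $O(T^3/\delta^{2})$ for the bracket, and the explicit $1/\ln X$ then gives the stated error. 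Once \ref{a5} is established, \ref{i} follows by letting $X \to \infty$ with $T$, and hence $\delta$, fixed, so that $O(\frac{T^3}{\delta^{2}\ln X}) \to 0$ and the bracketed expression converges to $\pi S(T)$.
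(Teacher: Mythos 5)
Your proposal is correct and takes exactly the paper's route: the paper's entire proof of this theorem is the one-line instruction to set $s_0=\frac{1}{2}+iT$ in formula (\ref{h}) and take imaginary parts, which is precisely your first step. The only difference is that you actually supply the estimation of the $\tilde{\Delta}(X)$ term and of $\tilde{J}_X(s_0)$ (splitting the zero sum at $|\gamma-T|=1$ and using $|X^{\rho-s_0}|=1$ under RH) that justifies the error term $O(T^3/(\delta^2\ln X))$, which the paper asserts without any argument.
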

From now on ,we will prove formula \ref{i} is the same as Guinand
formula. To achieve it, we need to make some simplification as
follows:

Let's first simplify the term
$$Im(\int_{2}^{\frac{1}{2}+iT}\frac{X^{1-s}}{1-s}ds)$$ ,Let's transform the original integral path which is half rectangle with vertices $2,2+iT, \frac{1}{2}+iT$ to another
half rectangle with vertices $2,\frac{1}{2},\frac{1}{2}+iT$ and
orient is clockwise, we get
\begin{eqnarray}
\int_{2}^{\frac{1}{2}+iT}\frac{X^{1-s}}{1-s}ds=i\int_{0}^{T}\frac{X^{\frac{1}{2}-it}}{\frac{1}{2}-it}dt+\int_{\Gamma_r}\frac{X^{1-s}}{1-s}ds\label{j}
\end{eqnarray}
Where $\gamma_{r}=[\frac{1}{2},1-r]\cup S_{r}\cup[1+r,2]$ and $S_r$
is upper half semi-circle with radius $r$ and centered at  $z=1$
Taking imaginary part on both side of \ref{j},we get the first term
of right hand side is equal to
$$\sqrt{X}\int_{0}^{T}\frac{2cos(tlogX)+4tsin(tlogX)}{1+4t^2}dt$$ which is set to be $f_{1}(T,X)$
For the second term of right side of \ref{j}, we have that
\begin{eqnarray}
Im(\int_{\Gamma_r}\frac{X^{1-s}}{1-s}ds)=Im(\int_{J_r}\frac{X^{1-s}}{1-s}ds)\nonumber\\
=lim_{r\rightarrow
0}Im(\int_{J_r}\frac{X^{1-s}}{1-s}ds)=-\pi\label{b2}
\end{eqnarray}

Let's pick up the second term of righ side of \ref{a1} i.e.
$\int_{1}^{X}\frac{sin(Tlogt)}{\sqrt{t}logt}dt$ and set it to be
$f_{2}(T,X)$, Since $\frac{sin(Tlogt)}{\sqrt{t}logt}$ is continously
differentiable with respect to $T$, we have that
\begin{eqnarray}
\frac{\partial f_2}{\partial T}=\int_{1}^{X}\frac{cos(Tlogt)}{\sqrt{t}}dt\nonumber\\
=\int_{0}^{lnX}e^{\frac{1}{2}u}cos(Tu)du
=\frac{\sqrt{X}}{1+4T^2}[2cos(TlogX)+4Tsin(TlogX]-\frac{2}{1+4T^2}\label{a2}
\end{eqnarray}

in which we have used the substituation $u=logt$

We can also notice that $$\frac{\partial f_1}{\partial
T}=\frac{\sqrt{X}}{1+4T^2}[2cos(TlogX)+4Tsin(TlogX]$$ By \ref{a2},
we get
\begin{eqnarray}
\frac{\partial f_{1}}{\partial T}-\frac{\partial f_{2}}{\partial
T}=\frac{2}{1+4T^2}\nonumber
\end{eqnarray}

Thus,
\begin{eqnarray}
f_{1}(T,X)-f_{2}(T,X)=\int_{0}^{T}\frac{2}{1+4t^2}dt\label{a3}\\
=arctan2T\nonumber
\end{eqnarray}
Consequently,by \ref{j}, \ref{b2},\ref{a3}
\begin{eqnarray}
Im(\int^{\frac{1}{2}+iT}\frac{X^{1-s}}{1-s}ds)-\int_{1}^{X}\frac{sin(Tlogt)}{\sqrt{t}logt}dt
=arctan2T-\pi\label{c2}
\end{eqnarray}
Let's single out the term $$\frac{sin(TlogX)}{logX}\left\{\sum_{n<
X}\Lambda(n)n^{-\frac{1}{2}}-2X^{\frac{1}{2}}\right\}$$ in right
hand of \ref{a1} and get it simplified as follows:

\begin{eqnarray}
\frac{sin(TlogX)}{logX}\left\{\sum_{n< X}\Lambda(n)n^{-\frac{1}{2}}-2X^{\frac{1}{2}}\right\} \nonumber\\
=\frac{sin(TlogX)}{logX}[\int_{1}^{X}x^{-\frac{1}{2}}d\psi(x)-2\sqrt{X} ]\nonumber\\
=\frac{sin(TlogX)}{logX}[\psi(X)X^{-\frac{1}{2}}+\frac{1}{2}\int_{1}^{X}\psi(x)x^{-\frac{3}{2}}dx-2\sqrt{X}]\nonumber\\
=\frac{\Delta(X)sin(TlogX)}{\sqrt{X}(logX)}+\frac{sin(TlogX)}{2logX}\int_{1}^{X}\Delta(x)x^{-\frac{3}{2}}dx\label{b4}
\end{eqnarray}

and by the theorem 2 , we have that
\begin{eqnarray}
\int_{1}^{X}\Delta(x)x^{-\frac{3}{2}} dx= -\sum_{\rho}\frac{X^{\rho-\frac{1}{2}}-1}{\rho(\rho-\frac{1}{2})}+2ln(2\pi)(X^{-\frac{1}{2}}-1)\nonumber\\
-\sum_{n\geq
1}\frac{X^{-2n-\frac{1}{2}}-1}{2n(2n+\frac{1}{2})}=O(1)\nonumber
\end{eqnarray}
With formula \ref{b4}, we have
\begin{eqnarray}
\frac{sin(TlogX)}{logX}[\sum_{n\leq
X}\Lambda(n)n^{-\frac{1}{2}}-2X^{\frac{1}{2}}]=\frac{\Delta(X)sin(TlogX)}{\sqrt{X}(logX)}+O(\frac{1}{logX})\label{c3}
\end{eqnarray}

Since
\begin{eqnarray}
N(T)=\frac{1}{\pi}arg\xi(\frac{1}{2}+iT)\nonumber\\
=\frac{1}{\pi}args(s-1)\pi^{-\frac{s}{2}}\Gamma(\frac{s}{2})\zeta(s)|_{s=\frac{1}{2}+iT}\nonumber\\
=\frac{1}{\pi}arg(-\frac{1}{4}-T^{2})-\frac{Tln \pi}{2\pi}+\frac{1}{\pi}arg\Gamma(\frac{1}{4}+\frac{iT}{2})+S(T)\nonumber\\
=1-\frac{Tln\pi}{2\pi}+\frac{1}{\pi}arg\Gamma(\frac{1}{4}+\frac{iT}{2})+S(T)\label{b3}
\end{eqnarray}

Whenever $T$ is not equal to any cordinates of some Riemann zeros,we
can rewrite Guinand formula \ref{a1} as follows
\begin{eqnarray}
\pi S(T)=F_{X}(T)+arctan(2T)-\pi+\frac{1}{2}arg\Gamma(\frac{1}{2}+iT)-arg\Gamma(\frac{1}{4}+\frac{iT}{2})\nonumber\\
-\frac{Tln2}{2}-\frac{1}{4}arctan(sinh\pi T)\label{c1}
\end{eqnarray}
Where
\begin{eqnarray}
F_{X}(T)=-lim_{X\rightarrow  \infty}[\sum_{n\leq X}\Lambda(n)\frac{sin(Tlogn)}{\sqrt{n}logn}-\int_{1}^{X}\frac{sin(Tlogt)}{\sqrt{t}logt}dt\nonumber\\
-\frac{sin(TlogX)}{logX}[\sum_{n\leq
X}\Lambda(n)n^{-\frac{1}{2}}-2X^{\frac{1}{2}}]]\nonumber
\end{eqnarray}
Using equation \ref{c1}(Guinand formula)  minus  equation \ref{i}
and notice \ref{c2} and \ref{c3}, we get that
\begin{eqnarray}
0=\frac{1}{2}arg\Gamma(\frac{1}{2}+iT)-arg\Gamma(\frac{1}{4}+\frac{iT}{2})
-\frac{Tln2}{2}-\frac{1}{4}arctan(sinh\pi T)\label{d1}
\end{eqnarray}
When $T$ is not cordinates of some Riemann zeros. We set right side
of \ref{d1} to be $d(T)$,then we just need to prove that $d(T)\equiv
0$  when  $T>0$. Let's show it as follows: By rewriting
$$\frac{Tln2}{2}=arg2^{\frac{iT}{2}}$$ and $$arctan(sinh\pi
T)=arg(1+isinh\pi T)$$, we have that
\begin{eqnarray}
4d(T)=arg\frac{\Gamma^{2}(\frac{1}{2}+iT)}{\Gamma^{4}(\frac{1}{4}+\frac{iT}{2})4^{iT}(1+isinh\pi T)}\nonumber\\
=Imlog\frac{\Gamma^{2}(\frac{1}{2}+iT)}{\Gamma^{4}(\frac{1}{4}+\frac{iT}{2})4^{iT}(1+isinh\pi T)}\nonumber\\
=\frac{1}{2i}[log\frac{\Gamma^{2}(\frac{1}{2}+iT)}{\Gamma^{4}(\frac{1}{4}+\frac{iT}{2})4^{iT}(1+isinh\pi
T)}-log\frac{\Gamma^{2}(\frac{1}{2}-iT)}{\Gamma^{4}(\frac{1}{4}-\frac{iT}{2})4^{-iT}(1-isinh\pi
T)}]\nonumber
\end{eqnarray}
Let $s=iT$, then $$sinh\pi T=-isin\pi s$$ and
\begin{eqnarray}
4d(T)=\frac{1}{2i}log\frac{\Gamma^{2}(\frac{1}{2}+s)\Gamma^{4}(\frac{1}{4}-\frac{s}{2})(1-sin\pi
s)}{\Gamma^{2}(\frac{1}{2}-s)\Gamma^{4}(\frac{1}{4}+\frac{s}{2})4^{2s}(1+sin\pi
s)}\label{g}
\end{eqnarray}
Let
$$g(s)=\frac{\Gamma^{2}(\frac{1}{2}+s)\Gamma^{4}(\frac{1}{4}-\frac{s}{2})(1-sin\pi
s)}{\Gamma^{2}(\frac{1}{2}-s)\Gamma^{4}(\frac{1}{4}+\frac{s}{2})4^{2s}(1+sin\pi
s)}$$ then $g(s)$ is a meromorphic function in the whole complex
number plane and by the formula \ref{g},  $g(s)|_{s=iT}=1$
 when $T>0$ . For the convenience of factorizing $g(s)$, let's set $s=\frac{1}{2}-z$ and reset $f(z)=g(s)$
 we have that
\begin{eqnarray}
f(z)=\frac{\Gamma^{2}(1-z)\Gamma^{4}(\frac{1}{2}
z)sin^{2}(\frac{\pi}{2}
z)}{\Gamma^{2}(z)\Gamma^{4}(\frac{1}{2}-\frac{1}{2}
z)4^{1-2z}cos^{2}(\frac{\pi}{2} z) }
\end{eqnarray}
We just need to prove that $f(z)\equiv 1$ for any $z \in C$ , that
can be derived by the formula
$$\Gamma(z)\Gamma(1-z)=\frac{\pi}{sin(\pi z)}$$

With the formulas \ref{c3},\ref{c1},\ref{d1}, we can rewrite the
formula \ref{a5} as:
$$\pi S(T)=-\sum_{n<X}\frac{\Lambda(n)sin(Tlogn)}{\sqrt{n}}+\frac{\Delta(X)sin(TlogX)}{\sqrt{X}(logX)}+\int_{1}^{X}\frac{sin(Tlogy)}{\sqrt{y}logy}dy$$
$$+ arctan(2T)-\pi+O(\frac{T^3}{\delta^{2}lnX})\label{a6}$$

Furthermore, we can rewrite above formula as an integral equation,
first of all,

$$\sum_{n<X}\frac{\Lambda(n)sin(Tlogn)}{\sqrt{n}}=\int_{a}^{X}\frac{sin(Tlogy)}{\sqrt{y}logy}d\psi(y)\nonumber$$
$$=\int_{a}^{X}\frac{sin(Tlogy)}{\sqrt{y}logy}dy+\int_{a}^{X}\frac{sin(Tlogy)}{\sqrt{y}logy}d\Delta(y)\nonumber$$
$$=\int_{a}^{X}\frac{sin(Tlogy)}{\sqrt{y}logy}dy+\frac{\Delta(X)sin(TlogX)}{\sqrt{X}(logX)}-\frac{\Delta(a)sin(Tloga)}{\sqrt{a}(loga)}$$
$$-\int_{a}^{X}\frac{Tcos(Tlny)-sin(Tlny)(\frac{lny}{2}+1)}{y\sqrt{y}ln^{2}y}\Delta(y)dy$$

Where $1<a<2$.
\newline
 We substitute above formula into \ref{a6},we get that
$$S(T)=-\frac{1}{\pi}\int_{a}^{X}\frac{Tcos(Tlny)-sin(Tlny)(\frac{lny}{2}+1)}{y\sqrt{y}ln^{2}y}\Delta(y)dy$$
$$-\frac{1}{\pi}[\int_{a}^{1}\frac{sin(Tlogy)}{\sqrt{y}logy}dy-\frac{\Delta(a)sin(Tloga)}{\sqrt{a}(loga)}+arctan(2T)-\pi]\label{b5}$$

and

$$\frac{1}{\pi}\int_{a}^{X}\frac{Tcos(Tlny)-sin(Tlny)(\frac{lny}{2}+1)}{y\sqrt{y}ln^{2}y}\Delta(y)dy$$
$$=\frac{1}{\pi}\int_{a}^{X}\frac{Tcos(Tlny)-sin(Tlny)(\frac{lny}{2}+1)}{y\sqrt{y}ln^{2}y}d\tilde{\Delta}(y)$$
$$=\tilde{\Delta}(X)\frac{Tcos(TlnX)-sin(TlnX)(\frac{lnX}{2}+1)}{X\sqrt{X}ln^{2}X}-\tilde{\Delta}(a)\frac{Tcos(Tlna)-sin(Tlna)(\frac{lna}{2}+1)}{a\sqrt{a}ln^{2}a}$$
$$-\int_{a}^{X}\tilde{\Delta}(y)d\frac{Tcos(Tlny)-sin(Tlny)(\frac{lny}{2}+1)}{y\sqrt{y}ln^{2}y} $$

and
\begin{eqnarray}
\int_{a}^{X}\tilde{\Delta}(y)d\frac{Tcos(Tlny)-sin(Tlny)(\frac{lny}{2}+1)}{y\sqrt{y}ln^{2}y}=\int_{a}^{X}F(T,y)\tilde{\Delta}(y)dy\label{c4}
\end{eqnarray}

Where
$$F(T,y)=-\frac{T^{2}sin(Tlny)}{y^{\frac{5}{2}}lny}-\frac{2Tcos(Tlny)}{y^{\frac{5}{2}}lny}+\frac{3sin(Tlny)}{4y^{\frac{5}{2}}lny}$$
$$-\frac{2Tcos(Tlny)}{y^{\frac{5}{2}}ln^{2}y}+\frac{2sin(Tlny)}{y^{\frac{5}{2}}ln^{2}y}+\frac{2sin(Tlny)}{y^{\frac{5}{2}}ln^{3}y}\label{a7}$$

By \ref{a6},\ref{b5},\ref{c4} and when t is not the cordinate of a
Riemann zero, let $X\rightarrow \infty$, we have following integral
equation

\begin{eqnarray}
S(t)=-\frac{1}{\pi}\int_{a}^{\infty}F(t,y)\tilde{\Delta}(y)dy+g(a,t)\label{ie1}
\end{eqnarray}
Where
$$g(a,t)=-\frac{1}{\pi}[\int_{a}^{1}\frac{sin(tlogy)}{\sqrt{y}logy}dy-\frac{\Delta(a)sin(tloga)}{\sqrt{a}(loga)}$$
$$+\tilde{\Delta}(a)\frac{tcos(tlna)-sin(tlna)(\frac{lna}{2}+1)}{a\sqrt{a}ln^{2}a}
+arctan(2t)-\pi]$$, $1<a<2$
\bigskip
\section{Representing $\tilde{\Delta}(x)$ in term of $S(T)$ }
In this section, under the RH , we will represent
$\tilde{\Delta}(x)$ as an integral of $S(T)$ via Riemann-Von
Mangoldt formula.  Let $N(T)$ be a function counting the number of
non-trivial Riemann zeros whose imaginary is between $0$ and
$T$,under the RH,we can rewrite the formula \ref{a}in term of $N(T)$
as follows,
\begin{eqnarray}
 \tilde{\Delta}(x)=-\sum_{\rho}\frac{x^{\rho+1}}{\rho(\rho+1)}-xln(2\pi)
-\sum_{n=1}^{\infty}\frac{x^{-2n+1}}{2n(2n-1)}\nonumber \\\\
 =-\int_{0}^{\infty}[\frac{x^{\frac{3}{2}+it}}{(\frac{1}{2}+it)(\frac{3}{2}+it)}+\frac{x^{\frac{3}{2}-it}}{(\frac{1}{2}-it)(\frac{3}{2}-it)}]dN(t)+f(x)\nonumber\\\\
 =-2x^{\frac{3}{2}}\int_{0}^{\infty}\frac{(\frac{3}{4}-t^2)cos(tlnx)+2tsin(tlnx)}{4t^{2}+(\frac{3}{4}-t^2)^2}dN(t)+f(x)
\end{eqnarray}

Where$$f(x)=-xln(2\pi)
-\sum_{n=1}^{\infty}\frac{x^{-2n+1}}{2n(2n-1)}$$ Noticing
\ref{b3},set
$$g(t)=1-\frac{tln\pi}{2\pi}+\frac{1}{\pi}arg\Gamma(\frac{1}{4}+\frac{it}{2})$$
Thus we have
\begin{eqnarray}
\tilde{\Delta}(x)=-2x^{\frac{3}{2}}\int_{0}^{\infty}\frac{(\frac{3}{4}-t^2)cos(tlnx)+2tsin(tlnx)}{4t^{2}+(\frac{3}{4}-t^2)^2}dS(t)\nonumber\\\\
-2x^{\frac{3}{2}}\int_{0}^{\infty}\frac{(\frac{3}{4}-t^2)cos(tlnx)+2tsin(tlnx)}{4t^{2}+(\frac{3}{4}-t^2)^2}dg(t)+f(x)
\end{eqnarray}
Putting the last two terms together and setting it to be
$\tilde{f}(x)$, we get that
\begin{eqnarray}
\tilde{\Delta}(x)=-2x^{\frac{3}{2}}\int_{0}^{\infty}\frac{(\frac{3}{4}-t^2)cos(tlnx)+2tsin(tlnx)}{4t^{2}+(\frac{3}{4}-t^2)^2}dS(t)+\tilde{f}(x)
\end{eqnarray}
Using integration by parts and noticing $S(t)=O(logt)$, we have
\bigskip
\begin{eqnarray}
\int_{0}^{\infty}\frac{(\frac{3}{4}-t^2)cos(tlnx)+2tsin(tlnx)}{4t^{2}+(\frac{3}{4}-t^2)^2}dS(t)\nonumber\\
=-\frac{4S(0)}{3}-\int_{0}^{\infty}S(t)d\frac{(\frac{3}{4}-t^2)cos(tlnx)+2tsin(tlnx)}{4t^{2}+(\frac{3}{4}-t^2)^2}\label{e1}
\end{eqnarray}
From now on, we are going to evaluate the second term of \ref{e1} in
detail for the convenience of checking.

 $$\int_{0}^{\infty}S(t)d\frac{(\frac{3}{4}-t^2)cos(tlnx)+2tsin(tlnx)}{4t^{2}+(\frac{3}{4}-t^2)^2}$$
$$=\int_{0}^{\infty}S(t)\frac{[(\frac{3}{4}-t^2)cos(tlnx)+2tsin(tlnx)]'[4t^{2}+(\frac{3}{4}-t^2)^2]}{[4t^{2}+(\frac{3}{4}-t^2)^2]^2}dt$$
$$-\int_{0}^{\infty}S(t)\frac{[(\frac{3}{4}-t^2)cos(tlnx)+2tsin(tlnx)][4t^{2}+(\frac{3}{4}-t^2)^2]'}
{[4t^{2}+(\frac{3}{4}-t^2)^2]^2}dt$$
$$=\int_{0}^{\infty}S(t)\frac{[-2tcos(tlnx)-(\frac{3}{4}lnx) sin(tlnx)+t^{2}lnx sin(tlnx)+2sin(tlnx)+2tlnx
cos(tlnx)][t^4+\frac{5}{2}+\frac{9}{16}]}{[4t^{2}+(\frac{3}{4}-t^2)^2]^2}dt
$$
$$-\int_{0}^{\infty}S(t)\frac{[\frac{3}{4}cos(tlnx)-t^{2}cos(tlnx)+2tsin(tlnx)][4t^3+5t]}{[4t^{2}+(\frac{3}{4}-t^2)^2]^2}dt$$

$$=\int_{0}^{\infty}S(t)\frac{-2t^{5}cos(tlnx)-\frac{3}{4}t^{4}sin(tlnx)+t^{6}lnxsin(tlnx)+2t^{4}sin(tlnx)+2t^{5}lnxcos(tlnx)}{(t^4+\frac{5}{2}t^{2}+\frac{9}{16})^2}dt\\$$

$$+\int_{0}^{\infty}S(t)\frac{-5t^{3}cos(tlnx)-\frac{15}{8}t^{2}sin(tlnx)+\frac{5}{2}t^{4}lnxsin(tlnx)+5t^{2}sin(tlnx)+5t^{3}lnxcos(tlnx)}{(t^4+\frac{5}{2}t^{2}+\frac{9}{16})^2}dt$$

$$+\int_{0}^{\infty}S(t)\frac{-\frac{9}{8}tcos(tlnx)-(\frac{27}{64}lnx) sin(lnx)+\frac{9}{16}t^{2}lnx sin(tlnx)+\frac{9}{8}sin(tlnx)+\frac{9}{8}tlnx
cos(tlnx)}{(t^4+\frac{5}{2}t^{2}+\frac{9}{16})^2}dt$$

$$-\int_{0}^{\infty}S(t)\frac{3t^{3}-4t^{5}cos(tlnx)+8t^{4}sin(tlnx)+\frac{15}{4}tcos(tlnx)-5t^{3}cos(tlnx)+10t^{2}sin(tlnx)}{(t^4+\frac{5}{2}t^{2}+\frac{9}{16})^2}dt$$

To summarize, we get
\begin{eqnarray}
\tilde{\Delta}(x)=-2x^{\frac{3}{2}}\int_{0}^{\infty}K(x,t)S(t)dt+f(x)\label{ie2}
\end{eqnarray}
Where we still let
$$f(x)=-2x^{\frac{3}{2}}\int_{0}^{\infty}\frac{(\frac{3}{4}-t^2)cos(tlnx)+2tsin(tlnx)}{4t^{2}+(\frac{3}{4}-t^2)^2}dg(t)-xln(2\pi)
-\sum_{n=1}^{\infty}\frac{x^{-2n+1}}{2n(2n-1)}+\frac{8S(0)}{3}x^{\frac{3}{2}}$$
and where
$$K(x,t)=lnx[t^{6}sin(tlnx)+2t^{5}cos(tlnx)+\frac{7}{4}t^{4}sin(tlnx)+5t^{3}cos(tlnx)-\frac{21}{16}t^{2}sin(tlnx)$$
$$+\frac{9}{8}tcos(tlnx)-\frac{27}{64}sin(tlnx)]+2t^{5}cos(tlnx)-6t^{4}sin(tlnx)-3t^{3}cos(tlnx)-5t^{2}sin(tlnx)$$
$$-\frac{39}{8}tcos(tlnx)+\frac{9}{8}sin(tlnx)$$
\bigskip

By the equations \ref{ie1},\ref{ie2}, we can get a system of
integral equations

\begin{eqnarray}
\left\{
\begin{array}{ccc}
\tilde{\Delta}(x)&=&-2x^{\frac{3}{2}} \int_{0}^{\infty}K(x,t)S(t)dt+f(x)\\\\
S(t)&=&-\frac{1}{\pi}
\int_{a}^{\infty}F(t,y)\tilde{\Delta}(y)dy+g(a,t)\label{ie3}
\end{array}\right.
\end{eqnarray}

We shall prove an inverse theorem of the Guinnand formula as
follows.
\begin{theorem}
Let $f(t)$ be a function which is continous on the interval $[0,
+\infty]$ except some  discrete points,which forms a set $E$ ,and
$f(t)=lim_{X\rightarrow \infty}f_{X}(t)$,
$$f_{X}(t)=-\sum_{n<X}\frac{\Lambda(n)sin(Tlogn)}{\sqrt{n}}+\frac{\Delta(X)sin(TlogX)}{\sqrt{X}(logX)}+\int_{1}^{X}\frac{sin(Tlogy)}{\sqrt{y}logy}dy$$
and $f_{X}(t)=O(logt)$ on $[0, +\infty] \setminus E$,then the RH
holds
\end{theorem}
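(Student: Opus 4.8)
The plan is to prove the contrapositive: assuming that the prime-side limit $f(t)$ exists and is $O(\log t)$, I would show that every non-trivial zero lies on the critical line. The whole argument rests on the observation that Theorem~4 (formula \ref{h}), and hence the identity \ref{a5}, is an \emph{unconditional} exact identity once the zero-sum $\tilde{J}_X$ is retained rather than estimated. Indeed, the quantity written as $O\!\left(\frac{T^3}{\delta^2\ln X}\right)$ in \ref{a5} is exactly $Im\,\tilde{J}_X(\frac{1}{2}+iT)$, and the Riemann hypothesis was invoked only to bound it. Combining \ref{a5} with the simplification \ref{c2}, the function $f_X(t)$ of the statement satisfies, for every $X$ and every $T$ that is not the ordinate of a zero,
\[
f_X(T)=\pi S(T)-\arctan(2T)+\pi-Im\,\tilde{J}_X(\tfrac12 +iT),
\]
where I write $\frac{1}{2}$ for the abscissa. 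The decisive feature is that every term on the right except $Im\,\tilde{J}_X$ is independent of $X$.

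First I would record the immediate consequence: since $f_X(T)\to f(T)$ with $f(T)$ finite, the quantity $Im\,\tilde{J}_X(\frac{1}{2}+iT)$ must converge to a finite limit as $X\to\infty$. I would then read off the $X$-dependence of $\tilde{J}_X(\frac{1}{2}+iT)$ from its definition in Theorem~4; its dominant part is
\[
-\frac{1}{\ln X}\sum_{\rho}\frac{s_0(s_0+1)}{\rho(\rho+1)(s_0-\rho)}\,X^{\rho-s_0},\qquad s_0=\tfrac12 +iT,
\]
so each zero $\rho$ contributes an oscillation of amplitude $\asymp \frac{|c_\rho|}{\ln X}\,X^{Re\,\rho-1/2}$, with coefficients $c_\rho=O(|\rho|^{-3})$ that are absolutely summable over all zeros.

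Next I would dichotomize. If every zero satisfies $Re\,\rho=\frac{1}{2}$, each amplitude is $\asymp \frac{|c_\rho|}{\ln X}\to 0$ and the absolutely convergent sum tends to $0$, so $Im\,\tilde{J}_X\to 0$ and everything is consistent, recovering $\pi S(T)=f(T)+\arctan(2T)-\pi$. If instead RH fails, there is a zero with $Re\,\rho=\beta_0>\frac{1}{2}$, whose term has amplitude $\asymp X^{\beta_0-1/2}/\ln X\to\infty$. Because the coefficients are absolutely summable and the phase of each growing term is $(\gamma-T)\ln X$, distinct off-line zeros and their conjugates oscillate at distinct frequencies and cannot cancel identically in $X$; hence $\limsup_{X}|Im\,\tilde{J}_X(\frac{1}{2}+iT)|=\infty$ for a generic $T$. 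This contradicts the convergence of $f_X(T)$ forced by the hypothesis, so RH must hold.

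The main obstacle is precisely this last cancellation step: ruling out that the superposition of infinitely many oscillatory growing contributions conspires to remain bounded. This is an $\Omega$-theorem of Landau type, morally equivalent to the classical fact that $\sup_\rho Re\,\rho>\frac{1}{2}$ forces $\psi(x)-x=\Omega(x^{\beta_0-\varepsilon})$. I would handle it either by extracting a single dominant frequency through an averaging or Tauberian argument in $X$ (testing against $X^{-\sigma}\,d\ln X$ to pick out the abscissa $\beta_0-\frac{1}{2}$), or by invoking a Landau-type oscillation theorem for the Dirichlet-type series implicit in $\tilde{J}_X$; the delicate case is when $\Theta=\sup_\rho Re\,\rho$ is not attained, where the growth rate $X^{\Theta-1/2}$ must be detected from zeros whose real parts only approach $\Theta$. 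Finally I would verify that the exceptional set $E$ of zero ordinates, where $S(T)$ jumps, together with the $O(\log t)$ hypothesis, is exactly what keeps the rearrangement legitimate and excludes divergence of $f_X$ for harmless reasons.
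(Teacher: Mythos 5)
There is a genuine gap, and in fact two distinct problems. First, your starting identity is not what you claim it is. Formula \ref{h} contains, besides the prime sum, the $\Delta(X)$ term and the integral that make up $f_X$, an additional $X$-dependent term $-\frac{\tilde{\Delta}(X)}{(\log X)X^{s_0+1}}(s_0+\frac{1}{\ln X})$. If RH fails this term need not vanish as $X\to\infty$: since $\tilde{\Delta}(X)=-\sum_\rho X^{\rho+1}/(\rho(\rho+1))+O(X)$, it contributes oscillations of amplitude $\asymp X^{\beta-1/2}/\log X$ of exactly the same shape and size as the ones you extract from $\tilde{J}_X$. So it is false that ``every term on the right except $Im\,\tilde{J}_X$ is independent of $X$,'' and the divergence you want to force could in principle be cancelled exactly by this omitted term --- indeed the full identity \ref{h} shows that \emph{all} the $X$-dependent pieces together converge (to $\log\zeta(s_0)$ minus the $X$-free pieces), so the whole question is how the blow-up distributes among the groupings, which your bookkeeping does not settle. (There is also a mismatch between the $\sum\Lambda(n)\sin(T\log n)/\sqrt{n}$ in the theorem's $f_X$ and the $\sum\Lambda(n)/((\log n)n^{s_0})$ appearing in \ref{h}; the $f_X$ of the statement is tuned to $-\zeta'/\zeta$, not to $\log\zeta$.) Second, and more fundamentally, the step you yourself flag as ``the main obstacle'' --- that a superposition of infinitely many growing oscillatory terms $c_\rho X^{\beta-1/2}e^{i(\gamma-T)\ln X}/\ln X$ cannot stay bounded --- is the entire content of the theorem and is not proved; ``distinct frequencies cannot cancel identically'' is not an argument when infinitely many frequencies accumulate and when $\Theta=\sup_\rho Re\,\rho$ may not be attained. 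A Landau-type $\Omega$-theorem of the strength you need is precisely what a correct proof must supply, and your proposal only names candidate techniques for it.

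The paper avoids this difficulty entirely by going in the opposite direction: it applies the explicit transform $f_X\mapsto\int_0^\infty\frac{4(s-1)t}{[(s-1/2)^2+t^2]^2}f_X(t)\,dt$, evaluates it termwise by residues to get exactly $-\sum_{n<X}\Lambda(n)n^{-s}+\Delta(X)X^{-s-1/2}/\log X+\frac{1}{1-s}(X^{1-s}-1)$, and uses the hypotheses ($f_X\to f$ pointwise with the uniform $O(\log t)$ bound) together with dominated convergence to conclude that the limit $F(s)$ is analytic for $Re\,s>\frac12$ and coincides with $-\zeta'(s)/\zeta(s)+\frac{1}{s-1}$ for $Re\,s>1$. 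Analytic continuation then forbids zeros of $\zeta$ in $Re\,s>\frac12$. This route converts the hypothesis directly into analyticity of $-\zeta'/\zeta$ in the half-plane and never has to prove any non-cancellation statement about sums over hypothetical off-line zeros. If you want to salvage your contrapositive strategy, you would need to supply the Landau-type oscillation theorem and redo the accounting with the $\tilde{\Delta}(X)$ term included; as it stands the proposal does not constitute a proof.
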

\begin{proof} Considering the function
$$F(s)=\int_{0}^{+\infty}\frac{2(s-1)}{(s-\frac{1}{2})^{2}+t^2}df(t)$$
and
$$f_{X}(s)=\int_{0}^{+\infty}\frac{2(s-1)}{(s-\frac{1}{2})^{2}+t^2}df_{X}(t)$$,
where $Re s > \frac{1}{2}$ Using integration by parts,
$$F(s)=-\frac{2f(0)}{s-\frac{1}{2}}+\int_{0}^{+\infty}\frac{4(s-1)t}{[(s-\frac{1}{2})^{2}+t^2]^2}f(t)dt$$
and
 $$f_{X}(s)=-\frac{2f_{X}(0)}{s-\frac{1}{2}}+\int_{0}^{+\infty}\frac{4(s-1)t}{[(s-\frac{1}{2})^{2}+t^2]^2}f_{X}(t)dt$$

Since $f(t)=lim_{X\rightarrow \infty}f_{X}(t)$ and
$f_{X}(t)=O(logt)$ on $[0, +\infty] \setminus E$, by the Lebesque
CCL,  $lim_{X\rightarrow \infty}f_{X}(s)=F(s)$
 and
\begin{eqnarray}
\int_{0}^{+\infty}\frac{4(s-1)t}{[(s-\frac{1}{2})^{2}+t^2]^2}f_{X}(t)dt=
\int_{0}^{+\infty}\frac{4(s-1)t}{[(s-\frac{1}{2})^{2}+t^2]^2}[-\sum_{n<X}\frac{\Lambda(n)sin(tlogn)}{\sqrt{n}}+\frac{\Delta(X)sin(tlogX)}{\sqrt{X}(logX)}+\int_{1}^{X}\frac{sin(Tlogy)}{\sqrt{y}logy}dy]dt
\end{eqnarray}
Using residue theorem, we have that
$$\int_{0}^{+\infty}\frac{4(s-1)t}{[(s-\frac{1}{2})^{2}+t^2]^2}\frac{\Lambda(n)sin(tlogn)}{\sqrt{n}}dt =\frac{\Lambda(n)}{n^s}$$

Similarly,
$$\int_{0}^{+\infty}\frac{4(s-1)t}{[(s-\frac{1}{2})^{2}+t^2]^2}[\frac{\Delta(X)sin(tlogX)}{\sqrt{X}(logX)}]dt=\frac{\Delta(X)}{X^{s+\frac{1}{2}}logX}$$
and
$$\int_{0}^{+\infty}\frac{4(s-1)t}{[(s-\frac{1}{2})^{2}+t^2]^2}[\int_{1}^{X}\frac{sin(Tlogy)}{\sqrt{y}logy}dy]dt=\frac{1}{1-s}[X^{1-s}-1]$$

To summarize,
\begin{eqnarray}
\int_{0}^{+\infty}\frac{4(s-1)t}{[(s-\frac{1}{2})^{2}+t^2]^2}f_{X}(t)dt=
-\sum_{n<X}\frac{\Lambda(n)}{n^s}+\frac{\Delta(X)}{X^{s+\frac{1}{2}}logX}+\frac{1}{1-s}[X^{1-s}-1]
\end{eqnarray}
Since for any $Re s>\frac{1}{2}$,$lim_{X\rightarrow
\infty}f_{X}(s)=F(s)$, and $F(s)$ is analytical in the half plane
$Res >\frac{1}{2}$, we notice that when $Re s>1$, we have
\begin{eqnarray}
lim_{X\rightarrow
\infty}f_{X}(s)=-\sum_{n=2}^{\infty}\frac{\Lambda(n)}{n^s}+\frac{1}{s-1}\\
=-\frac{\zeta'(s)}{\zeta(s)}+\frac{1}{s-1}=F(s)
\end{eqnarray}
Which means that RH is true.

\end{proof}

\bigskip
\section{Lower bound of $\tilde{\Delta}(x)$}

First of all , Let's derive a formula based on the functional
equation and formula, since

\begin{eqnarray}
\frac{\zeta'(s)}{\zeta(s)}=-s(s+1)\int_{1}^{X}\tilde{\Delta}(x)x^{-s-2}dx\nonumber\\
-\frac{s(s+1)}{2(s-1)}X^{1-s}+\sum_{\rho}\frac{s(s+1)X^{\rho-s}}{\rho(\rho+1)(s-\rho)}+\sum_{n\geq
1}\frac{s(s+1)X^{-2n-s}}{2n(2n-1)(s+2n)}\label{h1}
\end{eqnarray}

 As we know, by the
functional equation and \ref{h1}, we have
\begin{eqnarray}
Re
\frac{\zeta'(s)}{\zeta(s)}|_{s=\frac{1}{2}+it}=\frac{ln\pi}{2}-\frac{1}{2}Re\frac{\Gamma'}{\Gamma}(\frac{1}{2}+it)
\end{eqnarray}

Evaluating real part at$s=\frac{1}{2}+it$ on both sides of \ref{k1},
we get that
\begin{eqnarray}
\int_{1}^{X}\frac{\tilde{\Delta}(x)}{x^{\frac{5}{2}}}[(\frac{3}{4}-t^2)cos(tlnx)+2tsin(tlnx)]dx-
\sum_{t_{\rho}}\frac{(\frac{3}{4}-t^2)(\frac{3}{4}-t_{\rho}^2)+4tt_{\rho}}{(\frac{3}{4}-t_{\rho}^2)^2+4t_{\rho}^2}\frac{sin(t_{\rho}-t)lnX}{t_{\rho}-t}\nonumber\\
-\sum_{t_{\rho}}\frac{(\frac{3}{4}-t^2)(\frac{3}{4}-t_{\rho}^2)-4tt_{\rho}}{(\frac{3}{4}-t_{\rho}^2)^2+4t_{\rho}^2}\frac{sin(t_{\rho}+t)lnX}{t_{\rho}+t}\nonumber\\
+\sum_{t_{\rho}}\frac{\frac{3}{2}+2tt_{\rho}}{(\frac{3}{4}-t_{\rho}^2)^2+4t_{\rho}^2}cos(t_{\rho}-t)lnX\nonumber\\
+\sum_{t_{\rho}}\frac{\frac{3}{2}-2tt_{\rho}}{(\frac{3}{4}-t_{\rho}^2)^2+4t_{\rho}^2}cos(t_{\rho}+t)lnX=g_{X}(t)\label{k1}
\end{eqnarray}

Where $$g_{X}(t)=Re[-\frac{\zeta'(s)}{\zeta(s)}+\sum_{n\geq
1}\frac{s(s+1)X^{-2n-s}}{2n(2n-1)(s+2n)}]_{s=\frac{1}{2}+it}$$

By above formula, when $t\neq t_{\rho}$,

\begin{eqnarray}
\int_{1}^{X}\frac{\tilde{\Delta}(x)}{x^{\frac{5}{2}}}[(\frac{3}{4}-t^2)cos(tlnx)+2tsin(tlnx)]dx=O(1)\label{h3}
\end{eqnarray}

otherwise,
\begin{eqnarray}
\int_{1}^{X}\frac{\tilde{\Delta}(x)}{x^{\frac{5}{2}}}[(\frac{3}{4}-t_{\rho}^2)cos(t_{\rho}lnx)+2tsin(t_{\rho}lnx)]dx=lnX+O(1)\label{h2}
\end{eqnarray}

Where $\rho=\frac{1}{2}+it_{\rho}$ are Riemann zeros, to simplify
LHS of \ref{h2}, set
$\theta_{\rho}=arctan\frac{\frac{3}{4}-t_{\rho}^2}{2t_{\rho}}$,then
we have
\begin{eqnarray}
\int_{1}^{X}\frac{\tilde{\Delta}(x)}{x^{\frac{5}{2}}}sin(t_{\rho}lnx
+\theta_{\rho})dx=\frac{1}{\sqrt{(\frac{3}{4}-t_{\rho}^2)^2+4t_{\rho}^2}}lnX+O(1)\label{h4}
\end{eqnarray}
Let $u=lnx$, above formula can be reduced to
$$\int_{1}^{u}\frac{\tilde{\Delta}(e^u)}{e^{\frac{3}{2}u}}sin(t_{\rho}y
+\theta_{\rho})dy=\frac{1}{\sqrt{(\frac{3}{4}-t_{\rho}^2)^2+4t_{\rho}^2}}u+R_{\rho}(u)$$

Set
$f_{\rho}(u)=\frac{\tilde{\Delta}(e^u)}{e^{\frac{3}{2}u}}sin(t_{\rho}u
+\theta_{\rho})$, and
$A_{\rho}=\frac{1}{\sqrt{(\frac{3}{4}-t_{\rho}^2)^2+4t_{\rho}^2}}$
Thus

We have simplified form
$$\int_{0}^{X}f_{\rho}(t)dt=A_{\rho}X+R_{\rho}(X)$$

Let $g(t)=\frac{\tilde{\Delta}(e^t)}{e^{\frac{3}{2}t}}$ and
$max_{0\leq t < +\infty}\mid g(t)\mid =C_{1}$ , $\mu(x)=m\{t| \mid
g(t)\mid\leq x\}$, $\overline{\mu}(x)=X-\mu(x)$, and $E_{x}=\{t|
\mid g(t)\mid\leq x\}$

By choosing any $x<C_{1}$ ,  we have following estimate
$$A_{\rho}X+R_{\rho}(X)=\int_{0}^{X}f_{\rho}(t)dt\leq \int_{0}^{X}\mid g(t)\mid dt
=\int_{E_{x}}\mid g(t)\mid dt+\int_{[0,X]\setminus E_{x}} \mid
g(t)\mid dt$$
$$\leq x\mu(x)+C_{1}(X-\mu(x))$$

When $X$ is big enough, we have

$$\mu(x)\leq \frac{C_{1}-A_{\rho}}{C_{1}-x}X+\frac{C_{0\rho}}{C_{1}-x}$$

or
$$\overline {\mu}(x)\geq \frac{A_{\rho}-x}{C_{1}-x}X-\frac{C_{0\rho}}{C_{1}-x}$$

Where $C_{0\rho}=max_{0<t<\infty}\mid R_{\rho}(t)\mid $ and set
$F_{x}^{X}=\{u\mid |\frac{\tilde{\Delta}(u)}{u^{\frac{3}{2}}}|>x,
0<u<X\}$

Therefore $m(F_{x}^{X})\geq
\frac{A_{\rho}-x}{C_{1}-x}X-\frac{C_{0\rho}}{C_{1}-x}$

Where $\tilde{\Delta}(u)=\sum_{n\leq
u}(n-\psi(n))\Lambda(n)-\frac{u^2}{2}$

Choosing $\rho=\rho_{0}=\frac{1}{2}+14.134....$ which is the first
non-trivial Riemann zero, we have following theorem

\begin{theorem}
When $X$ is big enough, $m(F_{x}^{X})\geq
\frac{A_{\rho_{0}}-x}{C_{1}-x}X-\frac{C_{0\rho_{0}}}{C_{1}-x}$
\end{theorem}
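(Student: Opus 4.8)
The plan is to read the theorem straight off the resonant mean-value identity \ref{h2}, together with an elementary measure-theoretic (pigeonhole) argument, specialised to the first zero $\rho_0$. First I would recall that \ref{h2}, after the amplitude--phase collapse $(\frac{3}{4}-t_\rho^2)\cos\theta+2t_\rho\sin\theta=\sqrt{(\frac{3}{4}-t_\rho^2)^2+4t_\rho^2}\,\sin(\theta+\theta_\rho)$ with $\theta_\rho=\arctan\frac{\frac{3}{4}-t_\rho^2}{2t_\rho}$, is exactly \ref{h4}; performing the substitution $u=\ln x$ (under which $x^{-5/2}\,dx=e^{-3u/2}\,du$ and, relabelling the upper limit $\ln X$ as $X$, the range becomes $[0,X]$) turns it into the clean statement
$$\int_{0}^{X} f_\rho(u)\,du=A_\rho X+R_\rho(X),\qquad f_\rho(u)=\frac{\tilde\Delta(e^u)}{e^{3u/2}}\sin(t_\rho u+\theta_\rho),$$
where $A_\rho=1/\sqrt{(\frac{3}{4}-t_\rho^2)^2+4t_\rho^2}$ and $R_\rho$ is the $O(1)$ remainder inherited from the right-hand side of \ref{k1} at $t=t_\rho$; put $C_{0\rho}=\sup_{u>0}|R_\rho(u)|$.

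Next I would convert this lower bound on a signed integral into an upper bound on the measure of a sublevel set. Writing $g(u)=\tilde\Delta(e^u)/e^{3u/2}$ and using $|\sin|\le 1$, I have $\int_{0}^{X} f_\rho(u)\,du\le\int_{0}^{X}|g(u)|\,du$. Splitting the latter over $E_x=\{u:|g(u)|\le x\}$ and its complement, and bounding $|g|$ by $x$ on $E_x$ and by $C_1=\max_{u\ge 0}|g(u)|$ off it, gives $A_\rho X+R_\rho(X)\le x\,\mu(x)+C_1\,(X-\mu(x))$, where $\mu(x)=m(E_x\cap[0,X])$. Solving this linear inequality for $\overline\mu(x)=X-\mu(x)$ and using $R_\rho(X)\ge-C_{0\rho}$ yields, for every $x<C_1$ and all large $X$,
$$\overline\mu(x)\ge\frac{A_\rho-x}{C_1-x}\,X-\frac{C_{0\rho}}{C_1-x}.$$
Since $\overline\mu(x)$ is precisely the measure of the set on which $|g|$ exceeds $x$, i.e. $m(F_x^X)$, the general estimate is proved; taking $\rho=\rho_0$ with $t_{\rho_0}=14.134\ldots$ gives the theorem. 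Selecting the first zero is what makes the bound useful: $A_\rho$ decreases in $t_\rho$, so $\rho_0$ supplies the largest admissible $A_\rho$, hence both the strongest lower bound and the widest range $x<A_{\rho_0}$ over which it is non-trivial.

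The genuine obstacle is not the pigeonhole step but the two finiteness facts that make the displayed inequality non-vacuous. The first is $C_1<\infty$, i.e. $g$ bounded, equivalently $\tilde\Delta(x)=O(x^{3/2})$; this is where RH enters, for under RH each $\rho=\frac12+it_\rho$ contributes $x^{\rho+1}/(\rho(\rho+1))=O(x^{3/2}/t_\rho^{2})$ to the series \ref{a} defining $\tilde\Delta$, so that series converges and $g$ stays bounded. The second is that $R_\rho$ is bounded uniformly in $u$ (so $C_{0\rho}<\infty$), which I would secure by checking that the terms discarded from the right-hand side of \ref{k1} at $t=t_\rho$ remain $O(1)$ as $X\to\infty$ rather than merely $o(\ln X)$. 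Granting these two boundedness statements, the inequality and its specialisation to $\rho_0$ follow at once.
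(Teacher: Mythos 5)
Your argument is essentially identical to the paper's: the same amplitude--phase reduction of \ref{h2} to $\int_0^X f_{\rho}(u)\,du=A_{\rho}X+R_{\rho}(X)$, the same splitting of $\int_0^X|g|$ over $E_x$ and its complement, the same linear inequality solved for $\overline{\mu}(x)$, and the same specialisation to $\rho_0$. Your added remarks on why $C_1<\infty$ (via RH) and $C_{0\rho}<\infty$ supply justification the paper leaves implicit, but do not change the route.
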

\bigskip

Set
$$F_{X}(t)=\int_{1}^{X}\frac{\tilde{\Delta}(x)}{x^{\frac{5}{2}}}[(\frac{3}{4}-t^2)cos(tlnx)+2tsin(tlnx)]dx$$,
and $G_{X}(t)=\int_{0}^{t}F_{X}(y)dy$, by the formula
\ref{h3},\ref{h4},and Guinand formula i.e $lim_{X\rightarrow
\infty}G_{X}(t)=N(t)$, we can conjecture that when $X\rightarrow
\infty$, $F_{X}(t)$ will behave like a distribution more than an
ordinary function.  Let's verify it as follows:

First of all, we notice that
\begin{eqnarray}
\int_{0}^{+\infty}e^{-\epsilon
u}[(\frac{3}{4}-k^2)cos(ku)+2ksin(ku)]\frac{\tilde{\Delta}(e^u)}{e^{\frac{3}{2}u}}du\nonumber\\
=Re
[s(s+1)\int_{1}^{+\infty}\tilde{\Delta}(x)x^{-s-\epsilon-2}dx]_{s=\frac{1}{2}+ik}\nonumber
\end{eqnarray}

Using integration by parts couple of times, we get
\begin{eqnarray}
s(s+1)\int_{1}^{X}\tilde{\Delta}(x)x^{-s-\epsilon-2}dx=s\tilde{\Delta}(1)-\Delta(1)+\int_{1}^{X}x^{-s-\epsilon}d\Delta(x)\nonumber
+\epsilon
\int_{1}^{X}x^{-s-\epsilon}\Delta(x)dx\nonumber\\
-s\epsilon\int_{1}^{X}x^{-s-\epsilon-2}\tilde{\Delta}(x)dx-sx^{-s-\epsilon-1}\tilde{\Delta}(x)\nonumber
\end{eqnarray}
and
\begin{eqnarray}
\int_{1}^{X}x^{-s-\epsilon}d\Delta(x)=\int_{1}^{X}x^{-s-\epsilon}d\psi(x)-\int_{1}^{X}x^{-s-\epsilon}dx\nonumber
=\sum_{n<X}\frac{\Lambda(n)}{n^{s+\epsilon}}-\frac{X^{1-s-\epsilon}}{1-s-\epsilon}+1\nonumber
\end{eqnarray}
By the formula \ref{a0}, when $Res\geq \frac{1}{2}$we get that
\begin{eqnarray}
\int_{1}^{+\infty}x^{-s-\epsilon}d\Delta(x)=-\frac{\zeta'(s+\epsilon)}{\zeta(s+\epsilon)}+1
\end{eqnarray}
Therefore
\begin{eqnarray}
\int_{0}^{+\infty}e^{-\epsilon
u}[(\frac{3}{4}-k^2)cos(ku)+2ksin(ku)]\frac{\tilde{\Delta}(e^u)}{e^{\frac{3}{2}u}}
du
=Re[\frac{\zeta'(s+\epsilon)}{\zeta(s+\epsilon)}]\mid_{s=\frac{1}{2}+ik}+\varphi_{\epsilon}(k)\label{a1}
\end{eqnarray}

Where
$$\varphi_{\epsilon}(k)=Re[-s\epsilon\int_{1}^{\infty}x^{-s-\epsilon-2}\tilde{\Delta}(x)dx+\epsilon
\int_{1}^{\infty}x^{-s-\epsilon-1}\Delta(x)dx]\mid_{s=\frac{1}{2}+ik}$$

For the simplicity, we denote LHS of \ref{a1} by $J_{\epsilon}(k)$.
Choosing any test function $g(k)\in C_{0}^{\infty}(R^{+})$ ,where
$C_{0}^{\infty}(R^{+})$ is the set of all smooth functions which
have compact supports on $R^{+}$ , let's compute following inner
product,by \ref{a1}
\begin{eqnarray}
lim_{\epsilon\rightarrow 0}\langle J_{\epsilon}(k),g(k)\rangle=
Re[\frac{\zeta'(s+\epsilon)}{\zeta(s+\epsilon)}]\mid_{s=\frac{1}{2}+ik}+\varphi_{\epsilon}(k), g(k)\rangle\nonumber\\
=lim_{\epsilon\rightarrow 0}\langle
Re[\frac{\zeta'(s+\epsilon)}{\zeta(s+\epsilon)}]\mid_{s=\frac{1}{2}+ik},g(k)\rangle
+lim_{\epsilon\rightarrow 0}\langle
\varphi_{\epsilon}(k),g(k)\rangle
\end{eqnarray}

and it's not difficult to verify that $lim_{\epsilon\rightarrow
0}\langle \varphi_{\epsilon}(k),g(k)\rangle=0$

Using integration by parts twice, we have that
\begin{eqnarray}
\langle
Re[\frac{\zeta'(s+\epsilon)}{\zeta(s+\epsilon)}]\mid_{s=\frac{1}{2}+ik},g(k)\rangle=\langle
h(\frac{1}{2}+\epsilon+ik), g''(k) \rangle
\end{eqnarray}

Where $h(z)=\int_{1}^{z}ln\zeta(s)ds$ and $Re z>\frac{1}{2}$,the
integration path is the conventional contour from $1$ to $z$

Since $h(\frac{1}{2}+\epsilon+ik)=O(klogk)$ uniformly for any small
$\epsilon$ and $lim_{\epsilon\rightarrow
0}h(\frac{1}{2}+\epsilon+ik)=S_{1}(k)$

Where $S_{1}(k)=\int_{0}^{k}S(t)dt$

Finally, we have
\begin{eqnarray}
lim_{\epsilon\rightarrow 0}\langle
J_{\epsilon}(k),g(k)\rangle=\langle S_{1}(k), g''(k) \rangle
\end{eqnarray}

Before ending this section, let's take look at the equation \ref{k1}
again, we can rewrite this equation in term of integral equation as
follows:
\begin{eqnarray}
\int_{0}^{\infty}K_{X}(t,t')dN(t')=H_{X}(t)
\end{eqnarray}
Where
$$K_{X}(t)=-
\frac{(\frac{3}{4}-t^2)(\frac{3}{4}-t'^{2})+4tt'}{(\frac{3}{4}-t'^{2})^2+4t'^{2}}\frac{sin(t'-t)lnX}{t'-t}
-\frac{(\frac{3}{4}-t^2)(\frac{3}{4}-t'^{2})-4tt'}{(\frac{3}{4}-t'^{2})^2+4t'^{2}}\frac{sin(t'+t)lnX}{t'+t}$$
$$+\frac{\frac{3}{2}+2tt'}{(\frac{3}{4}-t'^{2})^2+4t'^{2}}cos(t'-t)lnX+\frac{\frac{3}{2}-2tt'}{(\frac{3}{4}-t'^{2})^2+4t'^{2}}cos(t'+t)lnX$$

and
$$H_{X}(t)=\int_{1}^{X}\frac{\tilde{\Delta}(x)}{x^{\frac{5}{2}}}[(\frac{3}{4}-t^2)cos(tlnx)+2tsin(tlnx)]dx+\frac{ln\pi}{2}-\frac{1}{2}Re\frac{\Gamma'}{\Gamma}(\frac{1}{2}+it)$$

Actually, above equation depends on the parameter $X$,for every
fixed $X$,we get a non trivial integral equation of $N(t)$, so we
obtain a family of integral equations, noticing that the integral
kernel $K_{X}(t,t')$ is an explicit function,it's expected that
exploring these integral equations will help us to understand RH
further, besides we can consider similar results for $L$ function
which satisfies functional equation.

\bigskip

\end{document}